\newcommand{\R}{\mathbb{R}} 
\newcommand{\N}{\mathbb{N}} 
\newcommand{\setu}{\mathfrak{u}}
\newcommand{\setv}{\mathfrak{v}}
\newcommand{\EE}{\mathbb{E}}
\newcommand{\PP}{\mathbb{P}}
\DeclareMathOperator{\cov}{Cov}
\DeclareMathOperator{\var}{Var}
\newcommand{\bsu}{\setu}    
\newcommand{\bsv}{\setv}    
  \providecommand*{\toclevel@author}{999}
  \providecommand*{\toclevel@title}{0}
\begin{document}

\title*{On ANOVA decompositions of kernels and Gaussian random field paths}
\titlerunning{On ANOVA decompositions of kernels and Gaussian random field paths} 
\author{D. Ginsbourger \and O. Roustant \and D. Schuhmacher \and N. Durrande \and N. Lenz}
\authorrunning{D. Ginsbourger \and O. Roustant \and D. Schuhmacher \and N. Durrande \and N. Lenz} 
\institute{
David Ginsbourger \and Nicolas Lenz 
\at Department of Mathematics and Statistics, University of Bern, Alpeneggstrasse 22, CH-3012 Bern, Switzerland. 
\email{ginsbourger@stat.unibe.ch}
\email{nicolas.lenz@gmail.com}
\and 
Olivier Roustant \and Nicolas Durrande 
\at \'Ecole Nationale Sup\'erieure des Mines, FAYOL-EMSE, LSTI, F-42023 Saint-Etienne, France. 
\email{roustant@emse.fr}
\email{durrande@emse.fr}
\and 
Dominic Schuhmacher 
\at Institut f\"ur Mathematische Stochastik,
Georg-August-Universit\"at G\"ottingen,
Goldschmidtstra{\ss}e 7, D-37077 G\"ottingen, Germany. 
\email{dominic.schuhmacher@mathematik.uni-goettingen.de}
}
\maketitle

\abstract{
The FANOVA (or ``Sobol'-Hoeffding'') decomposition of multivariate functions has been used for high-dimensional model representation and global sensitivity analysis. 
When the objective function $f$ has no simple analytic form and is costly to evaluate, a practical limitation is that computing FANOVA terms may be unaffordable due to numerical integration costs. 
Several approximate approaches relying on random field models have been proposed to alleviate these costs, where $f$ is substituted by a (kriging) predictor or by conditional simulations. 
In the present work, we focus on FANOVA decompositions of Gaussian random field sample paths, and we notably introduce an associated kernel decomposition (into $2^{2d}$ terms) called KANOVA. 
An interpretation in terms of tensor product projections is obtained, and it is shown that projected kernels control both the sparsity of Gaussian random field sample paths and the dependence structure between FANOVA effects. Applications on simulated data show the relevance of the approach for designing new classes of covariance kernels dedicated to high-dimensional kriging.
}

\section{Introduction: Metamodel-based Global Sensitivity Analysis}\label{sec:1}

Global Sensitivity Analysis (GSA) is a topic of importance for the study of complex systems as it aims at uncovering among many candidates which variables and interactions thereof 
are influential with respect to some response of interest. 
FANOVA (Functional ANalysis Of VAriance) \cite{Hoeffding1948, 
Sobol'1969, Efron.Stein1981, ant84}
has become commonplace for decomposing a real-valued function $f$ of $d$-variables into a sum of $2^{d}$ functions (a.k.a. \textit{effects}) of increasing dimensionality, and quantifying the influence of each variable or group of variables through non-negative indices summing up to one, the celebrated \textit{Sobol'  indices} \cite{sobol2001global, saltelli2008global}.
In practice $f$ is rarely known analytically and a number of statistical procedures have been 
proposed for estimating Sobol' indices based on a finite sample of evaluations of $f$; see \cite{Janon.etal2013} and the references therein. 
Alternatively, a pragmatic approach to GSA, when the evaluation budget is drastically limited by computational cost or time, is to first approximate $f$ by employing some class of surrogate models (e.g., regression, neural nets, splines, wavelets, kriging; see \cite{Touzani2011} for an overview) and then to perform the analysis on the obtained cheap-to-evaluate surrogate model. 
Here we focus essentially on kriging and Gaussian random field (GRF) models, with an emphasis on the interplay between covariance kernels and FANOVA decompositions of corresponding centred GRF sample paths. 

While screening and GSA relying on kriging have been used for at least two decades \cite{wel:buc:sac:wyn:mit:mor92}, probabilistic GSA in the Bayesian set-up seems to originate in \cite{Oakley2004}, where posterior effects and related quantities were derived under a GRF prior. 
Later on, posterior distributions of Sobol' indices were investigated in \cite{Marrel2009} relying on conditional simulations, an approach revisited and extended to multi-fidelity computer codes in \cite{LeGratiet.etal2014}. 
From a different perspective, FANOVA-graphs were used in \cite{Muehlenstaedt.etal2012} to incorporate GSA information into a kriging model, and a special class of kernels was introduced in \cite{jmva_nico} for which Sobol' indices of the kriging predictor are analytically tractable. 
Moreover, a class of kernels leading to centred GRFs with additive paths has been discussed in \cite{afst_nico}, and FANOVA decompositions of GRFs and their covariance were touched upon in \cite{Lenz2013} where GRFs with ortho-additive paths were introduced. 
More recently, a variant of the kernel investigated in \cite{jmva_nico} was revisited in \cite{Chastaing.LeGratiet} with a focus on GSA with dependent inputs, and a class of kernels related to ANOVA decompositions was studied in \cite{Duvenaud2011additif, Duvenaud2014}. 
In a different setting, GRF priors have been used for Bayesian FANOVA when the responses are curves \cite{Kaufman.Sain2010}.  

In the present paper we investigate ANOVA decompositions both for
(symmetric positive definite) kernels and for associated centred GRFs.
We show that under standard integrability conditions, s.p.d. kernels can be decomposed into $4^d$ terms that govern the joint distribution of the $2^d$ terms of the associated GRF FANOVA decomposition. 
This has some serious consequences in kriging-based GSA, as for instance the choice of a sparse kernel induces almost sure sparsity of the associated GRF paths, and that such phenomenon cannot be compensated by data acquisition.  

\section{Preliminaries and notation}\label{sec:2}

\textbf{FANOVA}. We focus on measurable $f: D \subseteq \R^{d} \longrightarrow \R$ ($d\in \N\backslash \{0\}$). In FANOVA with independent inputs, $D$ is typically assumed to be of the form $D=\prod_{i=1}^{d} D_{i}$ for some measurable subsets $D_{i}\in \mathcal{B}(\R)$, where each $D_{i}$ is endowed with a probability measure $\nu_{i}$ and $D$ is equipped with the product measure $\nu=\bigotimes_{i=1}^{d}\nu_{i}$.  
Assuming further that $f$ is square-integrable w.r.t. $\nu$, 
$f$ can be expanded into as sum of $2^d$ terms indexed by the subsets $\bsu \subseteq I=\{1,\dots, d\}$ of the $d$ variables,
\begin{equation}
\label{eq1}
f=\sum_{\bsu \subseteq I}f_{\bsu},
\end{equation}
where $f_{\bsu} \in \mathcal{F}=\mathrm{L}^2(\nu)$ 
depend only on the variables $x_j$ with $j \in \bsu$ (up to an a.e. equality, as all statements involving $\mathrm{L}^2$ from Equation~\eqref{eq1} on). 
Uniqueness of this decomposition is classically guaranteed by imposing that $\int f_{\bsu} \, \nu_{j}(\mathrm{d}x_{j}) = \mathbf{0}$ 
for every $j \in \bsu$,
in which case the FANOVA \textit{effects} $f_{\bsu}$ can be expressed in closed form as 
\begin{equation}
\label{eq2}
f_{\bsu}: \vec{x} \in D \longrightarrow 
f_{\bsu}(x_{1}, \dots, x_{d})=\sum_{\bsu' \subseteq \bsu}(-1)^{\vert \bsu\vert-\vert \bsu'\vert}\int 
f(x_{1}, \dots, x_{d}) \; \nu_{-\bsu'}(\mathrm{d} \vec{x}_{-\bsu'}),
\end{equation}
\noindent
where $\nu_{-\bsu'}=\bigotimes_{j \in I \setminus \bsu'} \nu_{j}$ and $\vec{x}_{-\bsu'}=(x_{i})_{i \in I \setminus \bsu'}$.
As developed in \cite{kuo:slo:was:woz10}, Equation~\eqref{eq2} is a special case of a decomposition relying on commuting projections.
Denoting by $P_{j}: f \in \mathcal{F} \longrightarrow \int f \mathrm{d} \nu_{j}$ the orthogonal projector onto the subspace $\mathcal{F}_{j}$ of $f\in \mathcal{F}$ not depending on $x_{j}$,  
the identity on $\mathcal{F}$ can be expanded as 
\begin{equation}
I_{\mathcal{F}} = \prod_{j=1}^d \big[ (I_{\mathcal{F}} -P_j) + P_j \big] = 
\sum_{\bsu \subseteq I } 
\biggl( \prod_{j \in \bsu} (I_{\mathcal{F}} -P_{j}) \biggr)
\biggl( \prod_{j \in I \setminus \bsu} P_{j} \biggr).
\end{equation}
FANOVA effects appear then as images of $f$ under the orthogonal projection operators onto the associated subspaces $\mathcal{F}_{\bsu}=\left(\bigcap_{j \notin \bsu} \mathcal{F}_{j}\right) \cap \bigl(\bigcap_{j \in \bsu} \mathcal{F}_{j}^{\perp}\bigr)$, i.e.\ we have that $f_{\bsu} = T_{\bsu}(f)$, where $T_{\bsu} = \left( \prod_{j \in \bsu} (I_{\mathcal{F}}-P_{j}) \right) \, \left( \prod_{j \notin \bsu} P_{j} \right)$. 
Finally, the squared 
norm of $f$ decomposes by orthogonality as 
$\|f\|^2 = \sum_{\bsu \subseteq I} \|T_{\bsu}(f)\|^2$ and the influence of each (group of) variable(s) on $f$ can be quantified via the Sobol' indices
\begin{equation} \label{eq:gensobol}
  S_{\bsu}(f) = \frac{\|T_{\bsu}(f-T_{\emptyset}(f))\|^2}{\|f-T_{\emptyset}(f)\|^2} = \frac{\|T_{\bsu}(f)\|^2}{\|f-T_{\emptyset}(f)\|^2}, \quad \bsu \neq \emptyset.
\end{equation}

\noindent
\textbf{Gaussian random fields (GRFs)}. A random field indexed by $D$ is a collection of random variables $Z=(Z_{\vec{x}})_{\vec{x}\in D}$ defined on a common probability space $(\Omega, \mathcal{A}, \mathbb{P})$. 
The random field is called a Gaussian random field (GRF) if
$(Z_{\vec{x}^{(1)}}, \dots, Z_{\vec{x}^{(n)}})$ is $n$-variate
normally distributed for any $\vec{x}^{(1)}, \dots, \vec{x}^{(n)} \in
D$ $(n\geq 1)$. The distribution of $Z$ is then characterized by its
mean function $m(\vec{x})=\mathbb{E}[Z_{\vec{x}}]$, $\vec{x} \in D$, and covariance function 
$k(\vec{x},\vec{y})=\operatorname{Cov}(Z_{\vec{x}}, Z_{\vec{y}})$, 
$\vec{x}, \vec{y} \in D$. 
It is well-known that admissible covariance functions coincide with symmetric positive definite (s.p.d.) kernels on $D\times D$ \cite{ber:tho04}. 

A \emph{multivariate} GRF taking values in $\R^p$ is a collection of $\R^p$-valued random vectors $Z=(Z_{\vec{x}})_{\vec{x}\in D}$ such that $Z_{\vec{x}^{(i)}}^{(j)}$, $1 \leq i \leq n$, $1 \leq j \leq p$, are jointly $np$-variate normally distributed for any $\vec{x}^{(1)}, \dots, \vec{x}^{(n)} \in D$. The distribution of $Z$ is characterized by its $\mathbb{R}^p$-valued mean function and a matrix-valued covariance function $(k_{ij})_{i,j\in \{1, \dots, p\}}$. 
 
In both real- and vector-valued cases (assuming additional technical conditions where necessary)
$k$ governs a number of \textit{pathwise} properties ranging from square-integrability to continuity, differentiability and more; see e.g.\ Section~1.4 of \cite{Adler.Taylor2007} or Chapter~5 of~\cite{scheuerer09} for details.
As we will see in Section~\ref{sec:4}, $k$ actually also governs the FANOVA decomposition of GRF paths $\omega \in \Omega \longrightarrow Z_{\bullet}(\omega) \in \R^{D}$. Before establishing this result, let us first introduce a functional ANOVA decomposition for kernels. 

\section{KANOVA: A kernel ANOVA decomposition}\label{sec:3}

Essentially we apply the $2d$-dimensional version of the decomposition introduced in Section~\ref{sec:2} to $\nu \otimes \nu$-square integrable kernels $k$ (s.p.d. or not). 
From a formal point of view it is more elegant and leads to more efficient notation if we work with the tensor products $T_{\bsu} \otimes T_{\bsv} \colon \mathcal{F} \otimes \mathcal{F} \longrightarrow \mathcal{F} \otimes \mathcal{F}$.
It is well known that $L^2(\nu \otimes \nu)$ and $\mathcal{F} \otimes \mathcal{F}$ are isometrically isomorphic (see \cite{Kree74} for details on tensor products of Hilbert spaces), and  we silently identify them here for simplicity. Then $T_{\bsu} \otimes T_{\bsv} = T^{(1)}_{\bsu} T^{(2)}_{\bsv} = T^{(2)}_{\bsv} T^{(1)}_{\bsu}$, where $T^{(1)}_{\bsu}$, $T^{(2)}_{\bsv} \colon L^2(\nu \otimes \nu) \longrightarrow L^2(\nu \otimes \nu)$ are given by $(T^{(1)}_{\bsu} k)(\vec{x},\vec{y}) = (T_{\bsu}(k(\bullet,\vec{y}))(\vec{x})$ and $(T^{(2)}_{\bsv} k)(\vec{x},\vec{y}) = (T_{\bsv}(k(\vec{x},\bullet))(\vec{y})$. 

\begin{theorem}
\label{kernelANOVA}
Let $k$ be $\nu \otimes \nu$-square integrable. 
\begin{description}
\item[a)] There exist $k_{\bsu, \bsv} \in \mathrm{L}^2(\nu\otimes \nu)$ 
depending solely on $(\vec{x}_{\bsu}, \vec{y}_{\bsv})$ such that 
$k$ can be decomposed in a unique way as $k=\sum_{\bsu, \bsv \subseteq I}k_{\bsu, \bsv}$ 
under the conditions   
\begin{equation}
\forall \bsu, \bsv \subseteq I \ \:
\forall i \in \bsu \ \:
\forall j \in \bsv \ 
\int k_{\bsu, \bsv} \; \nu_{i}(\mathrm{d}x_{i})
= \mathbf{0}
\text{ and }
\int k_{\bsu, \bsv} \; \nu_{j}(\mathrm{d}y_{j})
= \mathbf{0}.
\end{equation}
We have
\begin{equation}
\label{eqkuv}
k_{\bsu, \bsv}(\vec{x}, \vec{y})=\sum_{\bsu'\subseteq \bsu}
\sum_{\bsv'\subseteq \bsv} (-1)^{\vert \bsu\vert+\vert \bsv\vert-\vert\bsu'\vert-\vert \bsv'\vert}\int k(\vec{x},\vec{y}) \; \nu_{-\bsu'}(\mathrm{d}\vec{x}_{-\bsu'})
\; \nu_{-\bsv'}(\mathrm{d}\vec{y}_{-\bsv'}).
\end{equation}
Moreover, $k_{\bsu, \bsv}$ may be written concisely as
$k_{\bsu, \bsv}= [T_{\bsu} \otimes T_{\bsv}] k$.

\item[b)] Suppose that $D$ is compact and $k$ is a continuous s.p.d. kernel. 
Then, for any $(\alpha_{\bsu})_{\bsu \subseteq I} \in \R^{2^{d}}$, the following function is also a s.p.d. kernel:
\begin{equation}
(\vec{x}, \vec{y})\in D\times D \longrightarrow 
\sum_{\bsu \subseteq I}
\sum_{\bsv \subseteq I}
\alpha_{\bsu} \alpha_{\bsv}
k_{\bsu, \bsv}(\vec{x}, \vec{y})
\in \R.
\end{equation}
\end{description}
\end{theorem}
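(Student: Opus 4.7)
The plan is to view $k \in L^2(\nu \otimes \nu)$ as a square-integrable function of $2d$ variables indexed by the disjoint union $I^{(1)} \sqcup I^{(2)}$ (one copy for $\vec x$, one for $\vec y$), each coordinate endowed with the corresponding $\nu_i$, and then to invoke the FANOVA decomposition recalled in Section~\ref{sec:2} in dimension $2d$. A general subset of $I^{(1)} \sqcup I^{(2)}$ has the form $\bsu \sqcup \bsv$ with $\bsu, \bsv \subseteq I$, and the associated FANOVA projector is precisely the tensor product $T_\bsu \otimes T_\bsv$ acting on $\mathcal{F} \otimes \mathcal{F} \cong L^2(\nu \otimes \nu)$ (as noted in the text, this operator can be written as the composition $T^{(1)}_\bsu T^{(2)}_\bsv$, which commute because the underlying projectors $P_i^{(1)}$, $P_j^{(2)}$ act on different factors). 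Defining $k_{\bsu,\bsv} = [T_\bsu \otimes T_\bsv] k$ then yields a function depending only on $(\vec x_\bsu, \vec y_\bsv)$ that integrates to zero against $\nu_i(\mathrm{d}x_i)$ for each $i \in \bsu$ and against $\nu_j(\mathrm{d}y_j)$ for each $j \in \bsv$, by the defining property of the $T_\bsu$'s applied factor-wise.

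\textbf{Closed form and uniqueness.} Applying formula~\eqref{eq2} coordinate-by-coordinate inside each factor (first for $T_\bsu$ acting on the $\vec x$-variable with $\vec y$ frozen, then for $T_\bsv$ on the $\vec y$-variable) immediately produces the double inclusion-exclusion expression~\eqref{eqkuv}; this calculation is routine because the operators commute and the products of $P_i$'s are just marginal integrations. Uniqueness follows exactly as in the $2d$-variable FANOVA: if $\sum_{\bsu,\bsv} k_{\bsu,\bsv} = \sum_{\bsu,\bsv} k'_{\bsu,\bsv}$ with both families satisfying the marginal annihilation conditions, then applying $T_{\bsu_0} \otimes T_{\bsv_0}$ to both sides and using that this projector kills every summand except the one of index $(\bsu_0, \bsv_0)$ yields $k_{\bsu_0,\bsv_0} = k'_{\bsu_0,\bsv_0}$.

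\textbf{Plan for part (b).} The key tool is Mercer's theorem: since $k$ is continuous, symmetric positive definite and $D$ is compact, there exist an orthonormal system $(\phi_n)_{n \geq 1}$ in $L^2(\nu)$ and eigenvalues $\lambda_n \geq 0$ such that $k(\vec x, \vec y) = \sum_{n} \lambda_n \phi_n(\vec x) \phi_n(\vec y)$, with the series converging absolutely and uniformly on $D \times D$. Applying $T^{(1)}_\bsu T^{(2)}_\bsv$ term by term and justifying the interchange of the (bounded, linear) integration operators with the uniformly convergent series gives
\begin{equation*}
k_{\bsu, \bsv}(\vec x, \vec y) = \sum_n \lambda_n (T_\bsu \phi_n)(\vec x) (T_\bsv \phi_n)(\vec y).
\end{equation*}
Substituting into the candidate kernel and collecting terms factorizes the double sum:
\begin{equation*}
\sum_{\bsu, \bsv \subseteq I} \alpha_\bsu \alpha_\bsv k_{\bsu, \bsv}(\vec x, \vec y) = \sum_n \lambda_n \psi_n(\vec x) \psi_n(\vec y), \quad \text{with } \psi_n = \sum_{\bsu \subseteq I} \alpha_\bsu (T_\bsu \phi_n).
\end{equation*}
Since each $\lambda_n \geq 0$, this is a nonnegative linear combination of rank-one s.p.d. kernels $(\vec x, \vec y) \mapsto \psi_n(\vec x) \psi_n(\vec y)$ and hence s.p.d. itself.

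\textbf{Main obstacle.} The only delicate point is the justification of the term-wise application of $T^{(1)}_\bsu T^{(2)}_\bsv$ in the Mercer expansion and of the pointwise (not merely almost everywhere) interpretation of the resulting kernel: this requires that the $T_\bsu \phi_n$'s are continuous representatives (which they are, as iterated integrals of continuous functions over a compact set) and that the interchange of integrals with the Mercer series is legitimate, which follows from uniform convergence and Fubini. Once these points are settled the rest is bookkeeping.
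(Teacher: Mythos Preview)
Your proposal is correct and follows essentially the same route as the paper: part~(a) is obtained by recognizing the KANOVA as the $2d$-dimensional FANOVA of Section~\ref{sec:2} (the paper's proof simply says that \eqref{eqkuv} and the identification $k_{\bsu,\bsv}=[T_\bsu\otimes T_\bsv]k$ follow directly from those statements), and part~(b) is proved via Mercer's theorem exactly as you describe, obtaining $\sum_{\bsu,\bsv}\alpha_\bsu\alpha_\bsv k_{\bsu,\bsv}(\vec x,\vec y)=\sum_i \lambda_i \psi_i(\vec x)\psi_i(\vec y)$ with $\psi_i=\sum_\bsu \alpha_\bsu T_\bsu\phi_i$ after noting that the $T_\bsu$'s are bounded on continuous functions. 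Your discussion of the ``main obstacle'' is slightly more explicit than the paper's, but the justification (uniform convergence plus boundedness of the operators) is the same.
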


\begin{proof}
The proofs are in the appendix (Section~\ref{ProofkernelANOVA}) to facilitate the reading. 
\end{proof}

\begin{example}[The Brownian kernel]
\label{kernelANOVA_MB}
Consider the covariance kernel $k(x,y)=\min(x,y)$ of the Brownian motion on $D=[0,1]$, and suppose that $\nu$ is the Lebesgue measure. The $k_{\bsu, \bsv}$'s  can then easily be obtained by direct calculation:
$k_{\emptyset, \emptyset}=\frac{1}{3}$,
$k_{\emptyset, \{1\}}(y)=y-\frac{y^{2}}{2}-\frac{1}{3}$, 
$k_{\{1\}, \emptyset}(x)=x-\frac{x^2}{2}-\frac{1}{3}$, 
and 
$k_{\{1\}, \{1\}}(x,y)=\min(x,y)-x+\frac{x^2}{2}-
y+\frac{y^2}{2}+\frac{1}{3}$. 
\end{example}

\begin{example}
\label{ex:ANOVA_TensorProductKernels}
Consider the very common class of tensor product kernels:
$ k(\vec{x},\vec{y}) = \prod_{i = 1}^d  k_i(x_i, y_i) $
where the $k_i$'s are 1-dimensional symmetric kernels.
It turns out that Equation~\eqref{eqkuv} boils down to a sum depending on 1- and 2-dimensional integrals, since
\begin{align}
\label{KANOVATP}
& \int k(\vec{x},\vec{y}) d\nu_{-\bsu}(\vec{x}_{-\bsu})d\nu_{-\bsv}(\vec{y}_{-\bsv}) = \nonumber \\
& \prod_{i \in \bsu \cap \bsv} k_i(x_i, y_i) \:\cdot
\prod_{i \in \bsu \setminus \bsv} \int k_i(x_i, \cdot) d\nu_i \:\cdot
\prod_{i \in \bsv \setminus \bsu} \int k_i(\cdot, y_i) d\nu_i \:\cdot
 \prod_{i \notin \bsu \cup \bsv} \iint k_i d(\nu_i \otimes \nu_i).
\end{align}

\noindent By symmetry of $k$, Equation~\eqref{KANOVATP} solely depends on the integrals $\iint k_i d(\nu_i \otimes \nu_i)$ and integral functions $t \mapsto \int  k_i(\cdot, t) d\nu_i$, $i=1, \dots, d$.
We refer to Section~\ref{sec:9} for explicit calculations using typical $k_i$'s. 
A particularly convenient case is considered next.
\end{example}

\begin{corollary} 
\label{centredANOVA}
Let $k_{i}^{(0)} \colon D_{i} \times D_{i} \longrightarrow \R$ ($1\leq i \leq d$) be argumentwise centred, i.e. such that 
$\int k_{i}^{(0)}(\cdot,t)\mathrm{d}\nu_{i}=\int k_{i}^{(0)}(s,  
\cdot)\mathrm{d}\nu_{i}=0$ for all $i \in I$ and 
$s,t \in D_{i}$, and consider $k(\vec{x},\vec{y})=\prod_{i=1}^{d}(1+k_{i}^{(0)}(x_{i},y_{i}))$. Then the KANOVA decomposition of $k$ consists of the terms 
$[T_{\bsu}\otimes T_{\bsu}]k(\vec{x},\vec{y})
=\prod_{i\in \bsu}k_{i}^{(0)}(x_{i},y_{i})$ and
$[T_{\bsu}\otimes T_{\bsv}]k = \mathbf{0}$ if $\bsu\neq \bsv$.
\end{corollary}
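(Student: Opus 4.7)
The plan is to expand the product defining $k$ and invoke the uniqueness clause of Theorem~\ref{kernelANOVA}.a). Distributing the product yields
\begin{equation*}
  k(\vec x,\vec y) = \prod_{i=1}^d \bigl( 1 + k_i^{(0)}(x_i,y_i) \bigr) = \sum_{\mathfrak{w} \subseteq I} h_{\mathfrak{w}}(\vec x, \vec y), \qquad h_{\mathfrak{w}}(\vec x, \vec y) := \prod_{i \in \mathfrak{w}} k_i^{(0)}(x_i, y_i),
\end{equation*}
with the convention that the empty product equals $1$. By the tensor product structure each $h_{\mathfrak{w}}$ sits in $\mathrm{L}^2(\nu\otimes \nu)$ (as $k \in \mathrm{L}^2(\nu \otimes \nu)$ forces each $k_i^{(0)} \in \mathrm{L}^2(\nu_i \otimes \nu_i)$), depends on $\vec x$ and $\vec y$ only through the coordinates indexed by $\mathfrak{w}$, and, for any $j \in \mathfrak{w}$, satisfies $\int h_{\mathfrak{w}} \, \nu_j(\mathrm{d}x_j) = \int h_{\mathfrak{w}} \, \nu_j(\mathrm{d}y_j) = \mathbf{0}$, by pulling out and annihilating the $j$-th factor via the argumentwise centredness of $k_j^{(0)}$.

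The next step is to promote this expansion into a candidate KANOVA decomposition by setting $\tilde k_{\bsu,\bsv} := h_{\bsu}$ when $\bsu = \bsv$ and $\tilde k_{\bsu,\bsv} := \mathbf{0}$ otherwise. Then $\sum_{\bsu,\bsv \subseteq I} \tilde k_{\bsu,\bsv} = \sum_{\mathfrak{w} \subseteq I} h_{\mathfrak{w}} = k$, each $\tilde k_{\bsu,\bsv}$ depends only on $(\vec x_{\bsu}, \vec y_{\bsv})$, and the centring conditions of Theorem~\ref{kernelANOVA}.a) hold by the preceding paragraph. The uniqueness statement of that theorem therefore forces $k_{\bsu,\bsv} = \tilde k_{\bsu,\bsv}$, which is the announced result: $k_{\bsu,\bsv} = \mathbf{0}$ whenever $\bsu \neq \bsv$, and $k_{\bsu,\bsu} = \prod_{i \in \bsu} k_i^{(0)}(x_i, y_i) = [T_{\bsu} \otimes T_{\bsu}]\, k$.

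There is essentially no obstacle here: the two-sided centredness of each $k_i^{(0)}$ is precisely what makes the product-form expansion coincide with the coordinate-support and centring requirements of KANOVA. As a cross-check, one could instead compute $[T_{\bsu}\otimes T_{\bsv}]\, h_{\mathfrak{w}}$ coordinatewise, using that on each factor $P_i$ annihilates $k_i^{(0)}(\cdot,y_i)$ and $k_i^{(0)}(x_i,\cdot)$ while $P_i(\mathbf{1}) = \mathbf{1}$ and $(I-P_i)(\mathbf{1}) = \mathbf{0}$; the resulting case split at each coordinate collapses the double sum to the diagonal $\bsu = \bsv = \mathfrak{w}$, yielding the same conclusion without appeal to uniqueness.
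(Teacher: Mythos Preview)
Your proof is correct and follows essentially the same route as the paper: expand the product $\prod_{i=1}^d(1+k_i^{(0)}(x_i,y_i))$ into $\sum_{\mathfrak{w}\subseteq I}\prod_{i\in\mathfrak{w}}k_i^{(0)}(x_i,y_i)$, verify the centring conditions on each summand via the argumentwise centredness of the $k_i^{(0)}$, and invoke the uniqueness part of Theorem~\ref{kernelANOVA}(a). Your write-up is simply more explicit about packaging the expansion as a candidate $(\tilde k_{\bsu,\bsv})$ and about the square-integrability of the pieces; the alternative coordinatewise computation you sketch at the end is a pleasant sanity check but not needed.
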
 

\begin{remark}
\label{ANOVAkernels}
By taking $k(\vec{x},\vec{y})=\prod_{i=1}^{d}(1+k_{i}^{(0)}(x_{i},y_{i}))$, where $k_i^{0}$ are s.p.d., we recover the so-called ANOVA kernels \cite{Wahba1990, Vapnik1998, jmva_nico}. Corollary~\ref{centredANOVA} guarantees for argumentwise centred $k_{i}^{(0)}$ that the associated $k$ has a simple KANOVA decomposition, with analytically tractable $k_{\bsu, \bsu}$ terms and vanishing $k_{\bsu, \bsv}$ terms (for $\bsu \neq \bsv$). 
\end{remark}

\section{FANOVA decomposition of Gaussian random field paths}\label{sec:4}

Let $Z= (Z_{\vec{x}})_{\vec{x} \in D}$ be a centred GRF with covariance function $k$. To simplify the arguments we make an assumption (for the rest of the article) that is often satisfied in practice: \emph{let $D_i$ be compact subsets of $\mathbb{R}$ and assume that $Z$ has continuous sample paths.} 
The latter can be guaranteed by a weak condition on the covariance kernel; see~\cite{Adler.Taylor2007}, Theorem~1.4.1.
For $r \in \mathbb{N} \setminus \{0\}$ write $C_b(D,\mathbb{R}^r)$ for the space of (bounded) continuous functions $D \to \mathbb{R}^r$ equipped with the supremum norm, and set in particular $C_b(D) = C_b(D,\mathbb{R})$.
We reinterpret $T_{\bsu}$ as maps $C_b(D) \to C_b(D)$, which are still bounded linear operators.
\begin{theorem}
\label{anovaproj_gauss}
The $2^d$-dimensional vector-valued random field
$(Z_{\vec{x}}^{(\bsu)}, \bsu  \subseteq I)_{\vec{x}\in D}$ is
Gaussian, centred, and has continuous sample paths again. Its
matrix-valued covariance function is given by
  \begin{equation}
  \label{covanovaproj}
    \cov(Z^{(\bsu)}_{\vec{x}}, Z^{(\bsv)}_{\vec{y}}) = [T_{\bsu} \otimes T_{\bsv}] k \, (\vec{x}, \vec{y}).
  \end{equation}
\end{theorem}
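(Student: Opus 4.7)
The plan is to work pathwise with the bounded linear operators $T_\bsu \colon C_b(D) \to C_b(D)$. For every $\omega$ in the probability-one set on which $Z_\bullet(\omega) \in C_b(D)$, I set $Z^{(\bsu)}_\bullet(\omega) := T_\bsu(Z_\bullet(\omega))$; continuity of the paths of the resulting $2^d$-dimensional random field is then automatic, and joint measurability follows from the explicit integral representation of $T_\bsu$ via standard Fubini arguments. It remains to verify joint Gaussianity (hence also centering) and the covariance identity.

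First I would use the expansion
\[
T_\bsu f(\vec{x}) = \sum_{\bsu' \subseteq \bsu} (-1)^{|\bsu|-|\bsu'|} \int f(\vec{x}_{\bsu'}, \vec{t}_{-\bsu'}) \, \nu_{-\bsu'}(\mathrm{d}\vec{t}_{-\bsu'})
\]
to present each $Z^{(\bsu)}_{\vec{x}}$ as a finite $\R$-linear combination of integrals of $Z$ against product probability measures on compact slices of $D$. Each such integral is the uniform, hence $L^2$, limit of Riemann-type sums $\sum_k c_k Z_{\vec{z}_k^{(n)}}$, the enabling ingredient being integrability of $\sup_{\vec{x}} |Z_{\vec{x}}|$, granted by Fernique's theorem under our continuous-paths-on-compact-$D$ assumption. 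Since each such sum is Gaussian and the subspace of centred Gaussian random variables is closed in $L^2$, every finite $\R$-linear combination of the entries of $(Z^{(\bsu)}_{\vec{x}})_{\bsu \subseteq I,\, \vec{x} \in D}$ is centred Gaussian, which establishes joint Gaussianity and centering simultaneously.

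For the covariance formula, bilinearity of covariance together with the expansion above reduces $\cov(Z^{(\bsu)}_{\vec{x}}, Z^{(\bsv)}_{\vec{y}})$ to a signed sum over $(\bsu', \bsv')$ of terms of the form
\[
\mathbb{E}\!\left[ \int Z_{(\vec{x}_{\bsu'}, \vec{s}_{-\bsu'})} \, \nu_{-\bsu'}(\mathrm{d}\vec{s}_{-\bsu'}) \int Z_{(\vec{y}_{\bsv'}, \vec{t}_{-\bsv'})} \, \nu_{-\bsv'}(\mathrm{d}\vec{t}_{-\bsv'}) \right].
\]
A double application of Fubini-Tonelli, justified by $\mathbb{E}[\sup_{\vec{x}}|Z_{\vec{x}}|^2] < \infty$ together with Cauchy-Schwarz, allows interchanging expectation with both integrals; each resulting iterated integral of $k$, once re-summed in $\bsu'$ and $\bsv'$, matches $[T^{(1)}_\bsu T^{(2)}_\bsv] k(\vec{x},\vec{y}) = [T_\bsu \otimes T_\bsv] k(\vec{x},\vec{y})$ as required. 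The only genuine technical point is the supremum integrability of $Z$, which underwrites both the Gaussian-$L^2$ approximation and the Fubini swaps; once that is in place, no step of the argument raises any subtlety.
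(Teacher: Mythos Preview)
Your argument is correct; the covariance computation via bilinearity and Fubini is essentially what the paper does (the paper justifies the integrability by continuity of $k$ and Cauchy--Schwarz rather than Fernique, but that is a cosmetic difference). The genuine divergence is in the Gaussianity step. The paper treats $Z$ as a random element of the separable Banach space $C_b(D)$, cites Rajput--Cambanis to identify its law as a Gaussian measure, and then applies the fact that bounded linear images of Gaussian measures are Gaussian (Tarieladze--Vakhania) to the combined operator $f \mapsto (T_\bsu f)_{\bsu \subseteq I}$. Your route is more hands-on: you approximate each $\int Z \, d\nu_{-\bsu'}$ in $L^2$ by Riemann sums, which are finite linear combinations of point evaluations of $Z$ and hence Gaussian, and then use closedness of the centred Gaussian subspace in $L^2$, with Fernique supplying the domination needed for the $L^2$ convergence. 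Your approach is more elementary and self-contained (no Banach-space Gaussian measure theory), at the price of an explicit approximation step; the paper's is shorter once one is willing to invoke the cited abstract results.
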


\begin{example}
Continuing from Example~\ref{kernelANOVA_MB},
let $B=(B_{x})_{x \in [0,1]}$ be Brownian motion on $D=[0,1]$, which is a centred Gaussian random field with continuous paths. Theorem~\ref{anovaproj_gauss} yields that $(T_{\emptyset} B, T_{\{1\}} B) = (\int_{0}^{1} B_{u} \mathrm{d} u,\, B_{x}-\int_{0}^{1} B_{u} \mathrm{d} u)_{x \in D}$ is a bivariate random field on $D$, where $T_{\emptyset} B$ is a $\mathcal{N}(0,1/3)$-distributed random variable, while $(T_{\{1\}} B_{x})$ is a centred Gaussian process with covariance kernel $k_{\{1\}, \{1\}}(x,y)=\min(x,y)-x+\frac{x^2}{2}-y+\frac{y^2}{2}+\frac{1}{3}$. The cross-covariance function of the components is given by $\operatorname{Cov}(T_{\emptyset} B, T_{\{1\}} B_{x})=x-\frac{x^{2}}{2}-\frac{1}{3}$. 
\end{example}

\begin{remark}
\label{KL}
Under our conditions on $Z$ and using the notation from the proof of Theorem~\ref{kernelANOVA}, we have a Karhunen--Lo\`eve expansion
$Z_{\vec{x}}=\sum_{i=1}^{\infty} \sqrt{\lambda_{i}} \varepsilon_{i} \phi_{i}(\vec{x})$, where $\varepsilon = (\varepsilon_i)_{i \in \mathbb{N} \setminus \{0\}}$ is a standard Gaussian white noise sequence and the series converges uniformly (i.e.\ in $C_b(D)$) with probability $1$ (and in $L^2(\Omega)$); for $d=1$ see \cite{Kuelbs1971, Adler.Taylor2007}. 
Thus by the continuity of $T_{\bsu}$, we can expand the projected random field as
\begin{equation}
Z_{\vec{x}}^{(\bsu)}=T_{\bsu}\left(\sum_{i=1}^{\infty} \sqrt{\lambda_{i}} \varepsilon_{i} \phi_{i}(\vec{x})\right)
=\sum_{i=1}^{\infty} \sqrt{\lambda_{i}} \varepsilon_{i} 
T_{\bsu}\left(\phi_{i}\right) (\vec{x}),
\end{equation}
where the series converges uniformly in $\vec{x}$ with probability 1 (and in $L^2(\Omega)$). This is the basis for an alternative proof of Theorem~\ref{anovaproj_gauss}. We can also verify Equation~\eqref{covanovaproj} under these conditions. Using the left/right-continuity of $\operatorname{cov}$ in $L^2(\Omega)$, we obtain indeed 
$\operatorname{cov}\bigl( Z_{\vec{x}}^{(\bsu)}, Z_{\vec{y}}^{(\bsv)}  \bigr) = \sum_{i=1}^{\infty} \lambda_{i} \: T_{\bsu}(\phi_{i})(\vec{x}) \: T_{\bsv}(\phi_{i})(\vec{y})
= k_{\bsu, \bsv} (\vec{x}, \vec{y})$. 
\end{remark}

\begin{corollary}
\label{sparsity}
(a) \ For any $\bsu \subseteq I$ the following statements are equivalent:
\begin{enumerate}
\item[(i)] $T_{\bsu}(k(\bullet,\vec{y})) =\mathbf{0}$ for every $\vec{y} \in D$
\item[(ii)] $[T_{\bsu} \otimes T_{\bsu}]k=\mathbf{0}$ 
\item[(iii)] $[T_{\bsu} \otimes T_{\bsu}]k(\vec{x},\vec{x})=0$ for every $\vec{x} \in D$
\item[(iv)] $\mathbb{P}( Z^{(\bsu)}=\mathbf{0} ) =1$
\end{enumerate}
(b) \ For any $\bsu,\bsv \subseteq I$ with $\bsu \neq \bsv$ the following statements are equivalent:
\begin{enumerate}
\item[(i)] $[T_{\bsu} \otimes T_{\bsv}]k=\mathbf{0}$ 
\item[(ii)] $Z^{(\bsu)}$ and $Z^{(\bsv)}$ are two independent GRFs
\end{enumerate}
\end{corollary}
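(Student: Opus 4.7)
The plan is to leverage Theorem~\ref{anovaproj_gauss} (which identifies the cross-covariances of the projected fields with the KANOVA building blocks $[T_{\bsu}\otimes T_{\bsv}]k$) together with the joint Gaussianity of the vector-valued random field $(Z^{(\bsu)})_{\bsu\subseteq I}$. For part (a) I would close a cyclic chain (i)$\Rightarrow$(ii)$\Rightarrow$(iii)$\Rightarrow$(iv)$\Rightarrow$(i); for part (b) the equivalence reduces to the Gaussian folklore that jointly Gaussian random fields are independent iff they are uncorrelated.

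For part (a), I would prove (i)$\Rightarrow$(ii) by rewriting $[T_{\bsu}\otimes T_{\bsu}]k = T^{(2)}_{\bsu} T^{(1)}_{\bsu} k$ and observing that (i) says precisely $T^{(1)}_{\bsu} k \equiv \mathbf{0}$. The step (ii)$\Rightarrow$(iii) is trivial pointwise evaluation. For (iii)$\Rightarrow$(iv) I would invoke Theorem~\ref{anovaproj_gauss} to get $\var(Z^{(\bsu)}_{\vec{x}}) = [T_{\bsu}\otimes T_{\bsu}]k(\vec{x},\vec{x}) = 0$, yielding $Z^{(\bsu)}_{\vec{x}} = 0$ almost surely at every fixed $\vec{x}$; the a.s.\ continuity of sample paths of $Z^{(\bsu)}$ and the separability of the compact domain $D$ then upgrade this to $\PP(Z^{(\bsu)}\equiv\mathbf{0})=1$ via a countable dense subset. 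Finally, (iv)$\Rightarrow$(i) rests on the identity $T_{\bsu}(k(\bullet,\vec{y}))(\vec{x}) = \cov(Z^{(\bsu)}_{\vec{x}}, Z_{\vec{y}})$, obtained by summing $\cov(Z^{(\bsu)}_{\vec{x}}, Z^{(\bsv)}_{\vec{y}}) = [T_{\bsu}\otimes T_{\bsv}]k(\vec{x},\vec{y})$ over $\bsv \subseteq I$ and using the FANOVA decomposition $Z = \sum_{\bsv} Z^{(\bsv)}$; under (iv) this covariance vanishes for all $\vec{y}$, so (i) follows.

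For part (b), implication (i)$\Rightarrow$(ii) uses that by Theorem~\ref{anovaproj_gauss} the pair $(Z^{(\bsu)}, Z^{(\bsv)})$ is jointly centred Gaussian with cross-covariance $[T_{\bsu}\otimes T_{\bsv}]k$. Vanishing of this cross-covariance yields block-diagonal covariance for every joint finite-dimensional marginal of the two projected fields, hence finite-dimensional independence of the two blocks; a standard cylinder-set / monotone class argument on $C_{b}(D)$ then lifts this to independence of the full continuous random fields. The converse (ii)$\Rightarrow$(i) is immediate: independence forces all cross-covariances to vanish pointwise, and therefore $[T_{\bsu}\otimes T_{\bsv}]k \equiv \mathbf{0}$.

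The main subtlety I anticipate is the careful bookkeeping between pointwise equality (the natural setting for continuous kernels on the compact $D$) and $\mathrm{L}^{2}(\nu\otimes\nu)$ equality (the ambient setting of Theorem~\ref{kernelANOVA}); the two notions coincide here by continuity of $k$, but this needs to be stated explicitly so that passing from an $\mathrm{L}^2$-vanishing $[T_{\bsu}\otimes T_{\bsv}]k$ to the pointwise zero appearing in (iii) and in the covariance formula is justified. A secondary technicality is the passage from ``uncorrelated at every finite configuration'' to ``independent as continuous random fields'', which leans on separability of $D$ and a.s.\ path continuity to reduce matters to a countable family of jointly Gaussian random vectors.
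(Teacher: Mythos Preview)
Your proposal is correct and follows essentially the same cyclic chain and Gaussian-uncorrelated-implies-independent argument as the paper's own proof, including the use of $T^{(2)}_{\bsu}T^{(1)}_{\bsu}$ for (i)$\Rightarrow$(ii), the variance-zero-plus-path-continuity argument for (iii)$\Rightarrow$(iv), and the cylinder-set/$\pi$-system lift for part~(b). The only cosmetic difference is that for (iv)$\Rightarrow$(i) the paper reads the identity $T_{\bsu}(k(\bullet,\vec{y}))=\cov(Z^{(\bsu)}_{\bullet},Z_{\vec{y}})$ directly off the Fubini computation in the proof of Theorem~\ref{anovaproj_gauss}, whereas you recover it by summing $\cov(Z^{(\bsu)}_{\vec{x}},Z^{(\bsv)}_{\vec{y}})$ over $\bsv$---equivalent since $\sum_{\bsv}T_{\bsv}=I_{\mathcal{F}}$.
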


\begin{remark}
A consequence of Corollary~\ref{sparsity} is that choosing a kernel without $\bsu$ component in GRF-based GSA will lead to a posterior distribution without $\bsu$ component whatever the assimilated data, i.e.\ $\PP(Z^{(\bsu)}=\mathbf{0} \, \vert \, Z_{\vec{x}_1},\ldots,Z_{\vec{x}_n}) = 1$ (a.s.). Indeed, an a.s. constant random element remains a.s. constant under any conditioning.

However, the analogous result does not hold for cross-covariances between $Z^{(\bsu)}$ and $Z^{(\bsv)}$ for $\bsu \neq \bsv$. Let us take for instance $D=[0,1]$, $\nu$ arbitrary, and $Z_{t}=U+Y_{t}$, where $U\sim\mathcal{N}(0,\sigma^2)$ ($\sigma>0$) and $(Y_{t})$ is a centred Gaussian process with argumentwise centred covariance kernel $k^{(0)}$.
Assuming that $U$ and $Y$ are independent, it is clear that $(T_{\emptyset}Z)_{s}=0$ and $(T_{\{1\}}Z)_{t}=Y_{t}$, so  $\operatorname{cov}((T_{\emptyset}Z)_{s}, (T_{\{1\}}Z)_{t})=0$. 
If in addition $Z$ was observed at a point $r\in D$, Equation~\eqref{covanovaproj} yields
$\operatorname{cov}( 
(T_{\emptyset}Z)_{s}, (T_{\{1\}}Z)_{t} | Z_{r})
=( T_{\emptyset} \otimes T_{\{1\}} )(k(\bullet, \star)-k(\bullet, r)k(r, \star)/k(r,r))(s,t)$,
where $k(s,t) = \sigma^2 + k^{(0)}(s,t)$ is the covariance kernel of $Z$.
By Equation~\eqref{eqkuv} we obtain
$\operatorname{cov}( 
(T_{\emptyset}Z)_{s}, (T_{\{1\}}Z)_{t} | Z_{r})
= -\sigma^2 k^{(0)}(t,r)/(\sigma^2 + k^{(0)}(r,r))$, which in general is nonzero. 
\end{remark}

\begin{remark}
Coming back to the ANOVA kernels discussed in Remark~\ref{ANOVAkernels}, Corollary~\ref{sparsity}(b)
implies that for a centred GRF with continuous sample paths and covariance kernel of the form $k(\vec{x},\vec{y})=\prod_{i=1}^{d}(1+k_{i}^{(0)}(x_{i},y_{i}))$, where $k_i^{(0)}$ is argumentwise centred, the FANOVA effects $Z^{(\bsu)}$, $\bsu \subseteq I$, are actually independent.
\end{remark}

To close this section, let us finally touch upon the distribution of Sobol' indices of GRF sample paths, relying on Theorem~\ref{anovaproj_gauss} and Remark~\ref{KL}. 

\begin{corollary}
\label{sobol}
For $\bsu \subseteq I$, $\bsu \neq \emptyset$, we can represent the Sobol' indices of $Z$ as
\begin{equation*}
  S_{\bsu}(Z)= 
\frac{Q_{\bsu}(\varepsilon, \varepsilon)}
{\sum_{\bsv \neq \emptyset} 
Q_{\bsv}(\varepsilon, \varepsilon)},
\end{equation*}
where the $Q_{\bsu}$'s are quadratic forms in a standard Gaussian white noise sequence. In the notation of Remark~\ref{KL},
$
  Q_{\bsu}(\varepsilon, \varepsilon) = \sum_{i=1}^{\infty} \sum_{j=1}^{\infty} \hspace*{-2pt}\sqrt{\lambda_{i}\lambda_{j}} \langle T_{\bsu}\phi_{i}, T_{\bsu}\phi_{j} \rangle \varepsilon_{i} \varepsilon_{j},
$
where the convergence is uniform with probability 1.
\end{corollary}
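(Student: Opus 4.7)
The plan is to derive the stated representation directly from the $L^{2}(\nu)$-definition of the Sobol' indices in Equation~\eqref{eq:gensobol}, combined with the Karhunen--Lo\`eve expansion of Remark~\ref{KL}. Since $D$ is compact and $Z$ has continuous paths, every sample path belongs a.s.\ to $L^{2}(\nu)$, and by Theorem~\ref{anovaproj_gauss} the FANOVA decomposition $Z=\sum_{\bsu \subseteq I} T_{\bsu}(Z)$ holds $\mathbb{P}$-a.s.\ with orthogonal summands in $L^{2}(\nu)$. Consequently $\|Z-T_{\emptyset}(Z)\|^{2}=\sum_{\bsv \neq \emptyset}\|T_{\bsv}(Z)\|^{2}$, and once each $\|T_{\bsu}(Z)\|^{2}$ is identified with $Q_{\bsu}(\varepsilon,\varepsilon)$ the formula for $S_{\bsu}(Z)$ follows by substitution.

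For this identification I would use $T_{\bsu}(Z)_{\vec{x}}=\sum_{i=1}^{\infty}\sqrt{\lambda_{i}}\,\varepsilon_{i}\,T_{\bsu}(\phi_{i})(\vec{x})$ from Remark~\ref{KL}, which converges uniformly in $\vec{x}$ with probability one. Since $D$ is compact, uniform convergence implies convergence in $L^{2}(\nu)$; writing the partial sum $S_{N}^{(\bsu)}=\sum_{i=1}^{N}\sqrt{\lambda_{i}}\,\varepsilon_{i}\,T_{\bsu}(\phi_{i})$ and using continuity of the $L^{2}(\nu)$-inner product one obtains
\[
\|S_{N}^{(\bsu)}\|^{2}=\sum_{i,j=1}^{N}\sqrt{\lambda_{i}\lambda_{j}}\,\varepsilon_{i}\varepsilon_{j}\,\langle T_{\bsu}(\phi_{i}),T_{\bsu}(\phi_{j})\rangle \longrightarrow \|T_{\bsu}(Z)\|^{2}\quad \text{a.s.}
\]
The left-hand side is exactly the $N$-th square truncation of $Q_{\bsu}(\varepsilon,\varepsilon)$, so this step simultaneously proves the quadratic-form representation of $\|T_{\bsu}(Z)\|^{2}$ and gives the advertised convergence of the double series, a.s.\ and uniformly in the (finite) family $\bsu\in 2^{I}$.

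The main obstacle is to make the informal double sum $\sum_{i,j}$ rigorous, since Gaussian chaos expansions of order two can be sensitive to the order of summation. My preferred way is to interpret it, as above, as the a.s.\ limit of the square partial sums $\|S_{N}^{(\bsu)}\|^{2}$, which sidesteps any rearrangement issue and inherits its convergence from that of the underlying Karhunen--Lo\`eve series. As a sanity check on well-posedness, one can bound the expected absolute value of the generic term by $\sqrt{\lambda_{i}\lambda_{j}}\,|\langle T_{\bsu}(\phi_{i}),T_{\bsu}(\phi_{j})\rangle|$, invoke Cauchy--Schwarz together with the contractivity of $T_{\bsu}$ on $L^{2}(\nu)$, and use $\sum_{i}\lambda_{i}=\int_{D} k(\vec{x},\vec{x})\,\nu(\mathrm{d}\vec{x})<\infty$ (Mercer's theorem applied to the continuous s.p.d.\ $k$ on compact $D$); this confirms that $Q_{\bsu}(\varepsilon,\varepsilon)$ is a.s.\ finite and concludes the proof.
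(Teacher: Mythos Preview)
Your argument is correct and follows essentially the same route as the paper's proof: apply the Karhunen--Lo\`eve expansion of Remark~\ref{KL}, pass the projector $T_{\bsu}$ through the series to obtain $Z^{(\bsu)}=\sum_i \sqrt{\lambda_i}\,\varepsilon_i\,T_{\bsu}\phi_i$, take $L^2(\nu)$-norms to identify $\|Z^{(\bsu)}\|^2=Q_{\bsu}(\varepsilon,\varepsilon)$, and treat the denominator by orthogonality of the FANOVA effects. The paper's proof is considerably terser and does not spell out the convergence details you add (square partial sums, Mercer's trace bound), but the underlying idea is the same.
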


\begin{remark}
Consider the GRF $Z'=Z-T_{\emptyset}Z$ with Karhunen--Lo{\`e}ve expansion $Z'_{\vec{x}}=\sum_{i=1}^{\infty} \sqrt{\lambda'_{i}} \phi'_{i}(\vec{x}) \varepsilon_i$.
From Equation~\eqref{eq:gensobol} and (the proof of) Corollary~\ref{sobol} we can see that $S_u(Z) = S_u(Z') = \sum_{i,j=1}^{\infty} g'_{ij} \varepsilon_i \varepsilon_j \big/ \sum_{i=1}^{\infty} \lambda'_i \varepsilon_i^2$, where $g'_{ij} = \sqrt{\lambda'_{i}\lambda'_{j}} \langle T_{\bsu}\phi'_{i}, T_{\bsu}\phi'_{j} \rangle$. Truncating both series above at $K \in \mathbb{N}$, applying the theorem in Section~2 of \cite{sawa1978} and then Lebesgue's theorem for $K \to \infty$, we obtain
\begin{equation*}
\begin{split}
 \EE S_{\bsu}(Z) &= \sum_{i=1}^{\infty} g'_{ii} \int_0^{\infty} \Bigl( (1+2 \lambda'_i t)^{3/2} \prod_{l \neq i} (1+2\lambda'_l t)^{1/2} \Bigr)^{-1} \; dt,\\
 \EE S_{\bsu}(Z)^2 &= \sum_{i=1}^{\infty} \sum_{j=1}^{\infty} (g'_{ii} g'_{jj} + 2 g'_{ij}{}^{2}) \int_0^{\infty} t \, \Bigl( (1+2 \lambda'_i t)^{3/2} \prod_{l \not\in \{i,j\}} (1+2\lambda'_l t)^{1/2} \Bigr)^{-1} \; dt.
\end{split}
\end{equation*}
\end{remark}

\section{Making new kernels from old with KANOVA}
\label{sec:5}

While kernel methods and Gaussian process modelling have proven efficient in a number of classification and prediction problems, finding a suitable kernel for a given application is often judged  difficult.  It should simultaneously express the desired features of the problem at hand while respecting positive definiteness, a mathematical constraint that is not straightforward to check in practice. In typical implementations of kernel methods, a few classes of standard stationary kernels are available for which positive definiteness was established analytically based on the Bochner theorem. On the other hand, some operations on kernels are known to preserve positive-definiteness, which enables enriching the available dictionary of kernels notably by multiplication by a positive constant, convex combinations, products and convolutions of kernels, or deformations of the input space. 
The section \textit{Making new kernels from old} of \cite{ras:wil06} (Section 4.2.4) covers a number of such operations. 
We now consider some new ways of creating admissible kernels in the context of the KANOVA decomposition of Section~\ref{sec:3}.
Let us first consider as before some square-integrable symmetric positive definite kernel $k_{\text{old}}$ and take $\bsu \subseteq I$. 

One straightforward approach to create a kernel whose associated Gaussian random field has paths in $\mathcal{F}_{\bsu}$ is then to plainly take the ``simple'' projected kernel
\begin{equation}
\label{projectedk1}
k_{\text{new}} = \pi_{\bsu} k_{\text{old}} \text{ with } 
\pi_{\bsu}=T_{\bsu} \otimes T_{\bsu}.
\end{equation} 
From Theorem~\ref{kernelANOVA}(b) it is clear that such kernels are s.p.d.; however, they will generally not be strictly positive definite. 

Going one step further, one obtains a richer class of $2^{2^{d}}$ positive definite kernels by considering  parts of $\mathcal{P}(I)$,  
and designing kernels accordingly. Taking $U \subset \mathcal{P}(I)$, we obtain a further class of projected kernels as follows:
\begin{equation}
\label{projectedk2}
k_{\text{new}} = \pi_{U} k_{\text{old}} \text{ with } 
\pi_{U} = T_{U} \otimes T_{U} = 
\sum_{\bsu \in U} \sum_{\bsv \in U} 
T_{\bsu} \otimes T_{\bsv}, \text{ where } 
T_{U}= \sum_{\bsu \in U} T_{\bsu}. 
\end{equation} 
The resulting kernel is again s.p.d., which follows from Theorem~\ref{kernelANOVA}(b) by choosing $\alpha_{\bsu} = 1$ if $\bsu \in U$ and  $\alpha_{\bsu}=0$ otherwise. Such a kernel contains not only the covariances induced by the effects associated with the different subsets of $U$, but also cross-covariances between these effects. Finally, another relevant class of positive definite projected kernels can be designed by taking 
\begin{equation}
\label{projectedk2}
k_{\text{new}} = \pi_{U}^{\star} k_{\text{old}} \text{ with } 
\pi_{U}^{\star} 
= \sum_{\bsu \in U}
T_{\bsu} \otimes T_{\bsu}.
\end{equation} 
This kernel corresponds to the one of a sum of independent random fields with same individual distributions as the $Z^{(\bsu)}$ $(\bsu \in U)$.
In addition, projectors of the form $\pi_{U_{1}}$,$\pi^{\star}_{U_{2}}$ ($U_{1}, U_{2} \subset \mathcal{P}(I)$) can be combined (e.g.\ by sums or convex combinations) in order to generate a large class of s.p.d. kernels, as illustrated here and in Section~\ref{sec:6}.

\begin{figure}
\begin{center}
\includegraphics[width=0.9\textwidth]{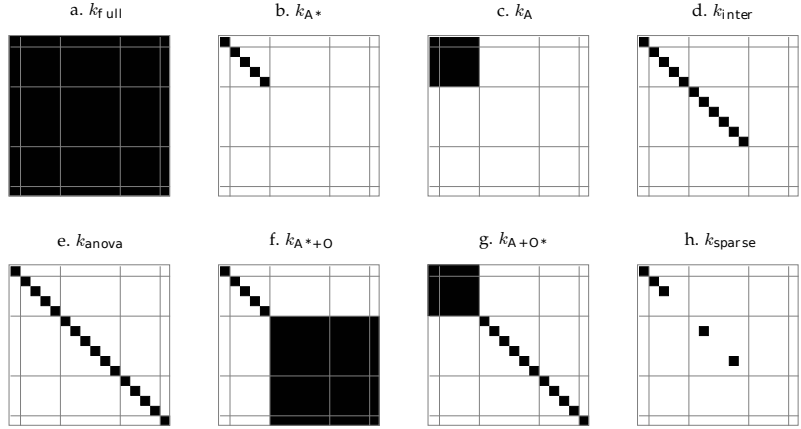} 
\caption{Schematic representations of a reference kernel $k_{\text{full}}$ and various projections or sums of projections. 
The expressions of these kernels are detailed in Section~\ref{sec:6}.}
\label{sparsekernels}
\end{center}
\end{figure}

\begin{example}
\label{orthoadd}
Let us consider $A=\{ \emptyset, \{1\}, \{2\}, \dots, \{d\} \}$ and $O$, the complement of $A$ in $\mathcal{P}(I)$. While $A$ corresponds to the constant and main effects forming the additive component in the FANOVA decomposition, $O$ corresponds to all higher-order terms, referred to as \textit{ortho-additive} component in \cite{Lenz2013}. Taking $\pi_{A}k=(T_{A}\otimes T_{A})k$ amounts to extracting the additive component of $k$ with cross-covariances between the various main effects (including the constant); see Figure~\ref{sparsekernels}(c). On the other hand, $\pi^{\star}_{A}k=\sum_{\bsu \in A}\pi_{\bsu}k$ retains these main effects without their possible cross-covariances; see Figure~\ref{sparsekernels}(b). In the next theorem (proven in \cite{Lenz2013}), analytical formulae are given for $\pi_{A}k$ and related terms for the class of tensor product kernels. 
\end{example}

\begin{theorem}
\label{orthoaddkernel}
Let $D_{i}=[a_{i}, b_{i}]$ ($a_{i}<b_{i}$) and $k=\prod_{i=1}^{d} k_{i}$, where the $k_{i}$ are s.p.d. kernels on $D_{i}$ such that 
$k_{i}(x_{i}, y_{i}) >0$ for all $x_{i}, y_{i} \in D_{i}$.
Then, the additive and ortho-additive components of $k$ with their cross-covariances are given by
\end{theorem}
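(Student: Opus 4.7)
The plan is to reduce the claim to a direct computation by exploiting the tensor-product structure of $k$ together with Example~\ref{ex:ANOVA_TensorProductKernels} and the uniqueness part of Theorem~\ref{kernelANOVA}(a). The key observation is that the KANOVA operator $T_{\bsu} \otimes T_{\bsv}$ decouples across the coordinates when applied to $\prod_{i=1}^d k_i$, so the whole KANOVA decomposition of $k$ reduces to the one-dimensional decompositions of each $k_i$.

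First I would centre each factor individually. Setting $\mu_i = \iint k_i \, d(\nu_i \otimes \nu_i)$, $f_i(t) = \int k_i(\cdot,t)\, d\nu_i$, $g_i(t) = f_i(t) - \mu_i$, and $k_i^{(0)}(x_i,y_i) = k_i(x_i,y_i) - f_i(x_i) - f_i(y_i) + \mu_i$, one checks immediately that $g_i$ integrates to $0$ against $\nu_i$ and that $k_i^{(0)}$ is argumentwise centred, so that
\begin{equation*}
k_i(x_i,y_i) = \mu_i + g_i(x_i) + g_i(y_i) + k_i^{(0)}(x_i,y_i).
\end{equation*}
Expanding $k = \prod_i k_i$ produces $4^d$ summands, each selecting for every $i$ one of four pieces: a constant $\mu_i$ (then $i \notin \bsu \cup \bsv$), the function $g_i(x_i)$ ($i \in \bsu \setminus \bsv$), the function $g_i(y_i)$ ($i \in \bsv \setminus \bsu$), or the fully centred piece $k_i^{(0)}(x_i,y_i)$ ($i \in \bsu \cap \bsv$). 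By Fubini, each such summand depends only on $(\vec{x}_{\bsu},\vec{y}_{\bsv})$ and is argumentwise centred along the indices in $\bsu$ and $\bsv$; by the uniqueness part of Theorem~\ref{kernelANOVA}(a) it therefore coincides with $k_{\bsu,\bsv} = [T_{\bsu} \otimes T_{\bsv}] k$, giving
\begin{equation*}
k_{\bsu,\bsv}(\vec{x},\vec{y}) = \Bigl( \prod_{i \in \bsu \cap \bsv} k_i^{(0)}(x_i,y_i) \Bigr) \Bigl( \prod_{i \in \bsu \setminus \bsv} g_i(x_i) \Bigr) \Bigl( \prod_{i \in \bsv \setminus \bsu} g_i(y_i) \Bigr) \Bigl( \prod_{i \notin \bsu \cup \bsv} \mu_i \Bigr).
\end{equation*}

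With each $k_{\bsu,\bsv}$ explicit, the additive component $\pi_A k$ is obtained by summing over $\bsu,\bsv \in A = \{\emptyset,\{1\},\dots,\{d\}\}$, i.e.\ over $|\bsu|,|\bsv| \le 1$, so only $(d+1)^2$ summands survive; the ortho-additive component $\pi_O k$ follows as the complementary sum (or simply as $k - \pi_A k$). The positivity hypothesis $k_i > 0$ guarantees $\mu_i > 0$, which lets one factor out $\prod_i \mu_i$ and rewrite the final expressions compactly using ratios of the form $g_i/\mu_i$ and $k_i^{(0)}/\mu_i$, matching the formula in \cite{Lenz2013}. The only real difficulty I anticipate is cosmetic bookkeeping of the surviving $(d+1)^2$ terms and their $\bsu \cap \bsv$, $\bsu \triangle \bsv$, $\bsu^c \cap \bsv^c$ index splits into a clean closed form; the analytic content is entirely carried by the uniqueness statement of Theorem~\ref{kernelANOVA}(a).
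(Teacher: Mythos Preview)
Your approach is correct and is exactly the natural one: use the tensor-product structure to reduce to the one-dimensional KANOVA of each $k_i$, then invoke uniqueness from Theorem~\ref{kernelANOVA}(a). The paper does not give its own proof here; it simply cites \cite{Lenz2013}, so there is nothing to compare against beyond saying that your direct computation is the expected route.

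Two small points nonetheless. First, your parenthetical ``or simply as $k-\pi_A k$'' is wrong: since $I=T_A+T_O$, one has $k=\pi_A k+(T_A\otimes T_O)k+(T_O\otimes T_A)k+\pi_O k$, so $\pi_O k = k-\pi_A k-(T_A\otimes T_O)k-(T_O\otimes T_A)k$, which is exactly the formula stated in the theorem. Your ``complementary sum'' (over $\bsu,\bsv\in O$) is the correct description. Second, the passage from the $(d+1)^2$ surviving terms to the compact displayed formulae is a bit more than cosmetic: for $\pi_A k$ one regroups using $a(\vec{x})/\mathcal{E}=1+\sum_i g_i(x_i)/\mu_i$ (in your notation $\mu_i=\mathcal{E}_i$, $g_i=E_i-\mathcal{E}_i$), and for the cross term one first computes $(I\otimes T_A)k=\sum_{\bsu}\sum_{|\bsv|\le 1}k_{\bsu,\bsv}=E(\vec{x})\bigl(1-d+\sum_j k_j(x_j,y_j)/E_j(x_j)\bigr)$ and then subtracts $\pi_A k$. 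None of this is difficult, but it is where the positivity hypothesis on $k_i$ (hence $E_i>0$, $\mathcal{E}_i>0$) is actually used, so it is worth carrying out rather than waving at.
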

\begin{equation*}
\begin{aligned}
	(\pi_{A} k) (\vec{x}, \vec{y}) = & \; \frac{a(\vec{x}) a(\vec{y})}{\mathcal{E}} + \mathcal{E} \cdot \sum_{i=1}^d \Bigg( \frac{k_i(x_i, y_i)}{\mathcal{E}_i} - \frac{E_i(x_i) E_i(y_i)}{\mathcal{E}_i^2} \Bigg)\\
(T_{O}\otimes T_{A} k) (\vec{x}, \vec{y}) =  & \; 
(T_{A}\otimes T_{O} k) (\vec{y}, \vec{x}) =
 E(\boldsymbol{x}) \cdot \bigg( 1 - d + \sum_{j=1}^d \frac{k_j(x_j, y_j)}{E_j(x_j)} \bigg) 
-(\pi_{A} k) (\vec{x}, \vec{y})
\\	
	(\pi_{O} k) (\vec{x}, \vec{y}) = & \; k(\vec{x}, \vec{y}) 
	 - (T_{A}\otimes T_{O} k) (\vec{x}, \vec{y}) -(T_{O}\otimes T_{A} k) (\vec{x}, \vec{y})
-(\pi_{A} k) (\vec{x}, \vec{y})
\end{aligned}
\end{equation*}
where $E_i(x_i) = \int_{a_i}^{b_i} { k_i(x_i, y_i) \; dy_i}$, $E(\vec{x}) = \prod_{i = 1}^d E_i(x_i)$, $\mathcal{E}_i = \int_{a_i}^{b_i} { E_i(x_i) \nu_{i}(\mathrm{d}x_i)}$, $\mathcal{E} = \prod_{i = 1}^d \mathcal{E}_i$, and 
$a(\vec{x}) = \mathcal{E} \left( 1 - d + \sum_{i=1}^d \frac{E_i(x_i)}{\mathcal{E}_i} \right)$.

\section{Numerical experiments}\label{sec:6}

\noindent
We now consider a 30 dimensional numerical experiment where we compare the prediction abilities of various sparse kernels obtained from the KANOVA decomposition of a squared-exponential kernel
\begin{equation}
  k_{\text{full}}(\vec{x},\vec{y}) = \exp(-||\vec{x}-\vec{y}||^2), \qquad \vec{x},\vec{y} \in (0,1)^{30}.
\end{equation}

As detailed in the previous sections, $k_{\text{full}}$ can be expanded as a sum of $4^{30}$ terms, and sparsified versions of $k_{\text{full}}$ can be obtained by projections such as in Example~\ref{orthoadd}. We will focus hereafter on seven sub-kernels (all summations are over $\bsu,\ \bsv \subseteq I$):

\begin{equation}
\begin{array}{l l l}
  k_{\text{anova}} = \sum \pi_{\bsu} k & \qquad \qquad &
  k_{A^\star} = \sum_{|\bsu| \leq 1} \pi_{\bsu} k \\
  k_{A} = \sum_{|\bsu| \leq 1} \sum_{ |\bsv| \leq 1} (T_{\bsu}\otimes T_{\bsv}) k & &
  k_{A^\star+O}  = \pi_{O} k + k_{A^\star}  \\
  k_{A+O^\star}  = k_{\text{anova}} - k_{A^\star} + k_{A}& &
  k_{\text{inter}}  = \sum_{|\bsu| \leq 2} \pi_{\bsu} k \\
  k_{\text{sparse}}  =  (\pi_{\emptyset} + \pi_{\{1\}} + \pi_{\{2\}} + \pi_{\{2,3\}} + \pi_{\{4,5\}}) k. &  &\\
\end{array}
\end{equation}

A schematic representation of these kernels can be found in Figure~\ref{sparsekernels}. Note that the tensor product structure of $k_{\text{full}}$ allows to use Theorem~\ref{orthoaddkernel} in order to get more tractable expressions for all kernels above. 
Furthermore, the integrals appearing in the $E_{i}$ and $\mathcal{E}_{i}$ terms can be calculated analytically as detailed in Section~\ref{sec:9}. 

We now compare the predictions obtained by GRF modelling with these kernels on a benchmark of test functions given by sample paths from centred GRFs possessing the same set of kernels ($200$ paths per kernel). 
Whenever the kernel used for prediction is not the same as the one used for simulation, a Gaussian observation noise with variance $\tau^2$ is assumed in the models used in prediction, where $\tau^2$ is chosen so as to reflect the part of  variance that cannot be approximated by the model. 

We consider a training set $X_{\text{train}}$ of $500$ points and a test set $X_{test}$ of $200$ points given by a Latin Hypercube design with optimized maximin criterion~\cite{san:wil:not03,DiceDesign}. The accuracy of the fit is measured using the following criterion :
\begin{equation}
  C = 1 - \frac{\sum (y_i - \hat{y}_i)^2}{\sum y_i^2}
\end{equation}
where $y$ is the vector of the test function values at the test points and $\hat{y}$ is the vector of predicted values. This criterion is equal to one when the prediction error is null and it is equal to zero when the model predicts as bad as the null function.

The values of the criterion for all couples of test functions and models are summarized in Table~\ref{tab:numexp}. Let us stress three important points from these results. 

First, this example illustrates that, unless the correlation range is increased, predicting a GRF
based on $500$ points in dimension $30$ is hopeless 
when the covariance structure is full or close to full  (first four rows of Table~\ref{tab:numexp}) no matter what sub-kernel is chosen for prediction. However, for GRFs with sparser covariance, prediction performances are strongly increased (last four rows of Table~\ref{tab:numexp}). 

Second, still focusing on the four last lines of Table~\ref{tab:numexp}, $k_{\text{inter}}$ seems to offer a nice compromise as it works much better than other sub-kernels on $Z_{\text{inter}}$ and achieves very good performances on the sparser GRF sample paths. Besides this, it is not doing much worse than the best sub-kernels on lines $1$ to $4$. 

Third, we observe that neglecting cross-correlations between blocks has very little or no influence on the results, so that the Gaussian kernel appears to have a structure relatively close to a diagonal one. This point remains to be studied analytically. 

\begin{table}
\begin{center}
\begin{tabular}{l|cccccccc}
& $k_{\text{full}}$ & $k_{\text{anova}}$ & $k_{A^\star+O}$ & $k_{A+O^\star}$ & $k_{\text{inter}}$ & $k_{A^\star}$ & $k_{A}$ & $k_{\text{sparse}}$ \\ \hline
$Z_{\text{full}}$ & 0.06 & 0.05 & 0.06 & 0.05 & 0.05 & 0.03 & 0.04 & 0.01 \\
$Z_{\text{anova}}$ & 0.05 & 0.05 & 0.05 & 0.05 & 0.04 & 0.03 & 0.03 & 0.01 \\
$Z_{A^\star+O}$ & 0.05 & 0.04 & 0.05 & 0.04 & 0.04 & 0.03 & 0.03 & 0.01 \\
$Z_{A+O^\star}$ & 0.06 & 0.06 & 0.06 & 0.06 & 0.05 & 0.04 & 0.04 & 0.01 \\
$Z_{\text{inter}}$ & 0.33 & 0.37 & 0.34 & 0.37 & \textcolor{blue}{0.7} & 0.28 & 0.28 & 0.07 \\
$Z_{A^\star}$ & 0.67 & 0.76 & 0.71 & 0.75 & \textcolor{blue}{0.96} & 
1 & 1 & 0.2 \\
$Z_{A}$ & 0.69 & 0.77 & 0.71 & 0.77 & \textcolor{blue}{0.96} & 1 & 1 & 0.18 \\
$Z_{\text{sparse}}$ & 0.75 & 0.83 & 0.8 & 0.78 & \textcolor{blue}{0.95} & 0.9 & 0.9 & 1 \\ \hline
mean & 0.33 & 0.37 & 0.35 & 0.36 & 0.47 & 0.41 & 0.42 & 0.19
\end{tabular} 
\end{center}
\caption{Average value of $C$ over the 200 replications of the experiment. Lines correspond to classes of test functions (GRF models used for simulation) while columns correspond to the kernels used for prediction. The four last lines of the $k_{\text{inter}}$ column are coloured to highlight the superior performances of that kernel when the class of test functions is as sparse or sparser than $Z_{\text{inter}}$.   
}
\label{tab:numexp}
\end{table}

\section{Conclusion and perspectives}\label{sec:7}

We have proposed an ANOVA decomposition of kernels 
(KANOVA), and shown how KANOVA governs the probability distribution of FANOVA effects of Gaussian random field paths. 
%
This has enabled us in turn to establish that ANOVA kernels correspond to centred Gaussian random fields with independent FANOVA effects, to make progress towards the distribution of Sobol' indices of Gaussian random fields, and also to suggest a number of  operations for making new symmetric positive definite kernels from existing ones. 
Particular cases include the derivation of additive and ortho-additive kernels extracted from tensor product kernels, for which a closed form formula was given. 
Besides this, a $30$-dimensional numerical experiment supports the hypothesis that KANOVA may be a useful approach to designing kernels for high-dimensional kriging, as the performances of the interaction kernel suggest. 
Perspectives include analytically calculating the norm of terms appearing in the KANOVA decomposition to better understand the structure of common GRF models. 
From a practical point of view, a next challenge will be to parametrize decomposed kernels adequately so as to recover from data which terms of the FANOVA decomposition are dominating and to automatically design adapted kernels from this.

\begin{acknowledgement}
The authors would like to thank Dario Azzimonti for proofreading. 
\end{acknowledgement}

\appendix
\section{Proofs}\label{sec:8}

\begin{proof}[Theorem~\ref{kernelANOVA}] 
\label{ProofkernelANOVA}
a) \, 
The first part and the concrete solution~\eqref{eqkuv} follow directly from the corresponding statements in Section~\ref{sec:2}.
Having established \eqref{eqkuv}, it is easily seen that $[T_{\bsu}\otimes T_{\bsv}]k =T^{(1)}_{\bsu} T^{(2)}_{\bsv} k$ coincides with $k_{\bsu,\bsv}$.

b) \, Under these conditions Mercer's theorem applies (see \cite{Steinwart.Scovel2012} for an overview and recent extensions). So there exist a non-negative sequence $(\lambda_{i})_{i \in \mathbb{N}\backslash \{0\}}$, 
and continuous representatives $(\phi_{i})_{i \in \mathbb{N}\backslash \{0\}}$ of an orthonormal basis of $\mathrm{L}^2(\nu)$ such that 
$
k(\vec{x}, \vec{y})=\sum_{i=1}^{\infty} 
\lambda_{i} \phi_{i}(\vec{x}) \phi_{i}(\vec{y}) 
$, \hspace*{0.5pt} $\vec{x}, \vec{y} \in D$,
where the convergence is absolute and uniform. 
Noting that $T_{\bsu}, T_{\bsv}$ are also bounded as operators on continuous functions, applying $T^{(1)}_{\bsu} T^{(2)}_{\bsv}$ from above yields that
\begin{equation}
\sum_{\bsu \subseteq I}
\sum_{\bsv \subseteq I}
\alpha_{\bsu} \alpha_{\bsv}
k_{\bsu, \bsv}(\vec{x}, \vec{y})
=
\sum_{i=1}^{\infty} \lambda_{i} 
\psi_{i}(\vec{x}) \psi_{i} (\vec{y}),
\end{equation}
where $\psi_{i}=
\sum_{\bsu \subseteq I} \alpha_{\bsu} (T_{\bsu}\phi_{i})$. Thus the considered function is indeed s.p.d.
\end{proof}

\begin{proof}[Corollary~\ref{centredANOVA}] Expand the product $\prod_{l=1}^{d}(1+k_{l}^{(0)}(x_{l},y_{l}))$ and conclude by uniqueness of the KANOVA decomposition, noting that $\int \prod_{l\in \bsu}k_{l}^{(0)}(x_{l},y_{l})\nu_i(\mathrm{d}x_i) = \int \prod_{l\in \bsu}k_{l}^{(0)}(x_{l},y_{l})\nu_j(\mathrm{d}y_j) = 0$ for any $\bsu \subseteq I$ and any $i,j \in \bsu$.
\end{proof}

\begin{proof}[Theorem~\ref{anovaproj_gauss}]
Sample path continuity 
implies product-mea\-sur\-abi\-l\-i\-ty of $Z$ and $Z^{(\bsu)}$, respectively, as can be shown by an approximation argument; see e.g.\ Prop.~A.D. in~\cite{schuhmacher2005}. Due to Theorem~3
in~\cite{talagrand1987}, the covariance kernel $k$ is continuous, hence 
$\int_{D} \EE |Z_{\vec{x}}| \, \nu_{-\bsu}(\mathrm{d}\vec{x_{-\bsu}}) \leq (\int_{D} k(\vec{x}, \vec{x}) \, \nu_{-\bsu}(\mathrm{d}\vec{x_{-\bsu}}))^{1/2} < \infty$ for any $\bsu \subseteq I$ and by Cauchy--Schwarz $\int_{D} \int_{D} \EE |Z_{\vec{x}}Z_{\vec{y}}| \, \nu_{-\bsu}(\mathrm{d}\vec{x_{-\bsu}}) \nu_{-\bsv}(\mathrm{d}\vec{y_{-\bsv}}) < \infty$ for any $\bsu,\bsv \subseteq I$. Replacing $f$ by $Z$ in Formula~\eqref{eq2}, taking expectations and using Fubini's theorem yields that $Z^{(\bsu)}$ is centred again. Combining~\eqref{eq2}, Fubini's theorem, and~\eqref{eqkuv} yields
\begin{equation}
\begin{split}
\label{covanovaproj_calc}
  \cov&(Z^{(\bsu)}_{\vec{x}}, Z^{(\bsv)}_{\vec{y}})  \\
    &= \sum_{\bsu'\subseteq \bsu} \sum_{\bsv'\subseteq \bsv} (-1)^{\vert \bsu\vert+\vert \bsv\vert-\vert \bsu'\vert-\vert \bsv'\vert} \cov \biggl( \int Z_{\vec{x}} \; \nu_{-\bsu'}(\mathrm{d}\vec{x}_{-\bsu'}),\, \int Z_{\vec{y}} \;
\nu_{-\bsv'}(\mathrm{d}\vec{y}_{-\bsv'}) \biggr) \\
    &= \sum_{\bsu'\subseteq \bsu} \sum_{\bsv'\subseteq \bsv} (-1)^{\vert \bsu\vert+\vert \bsv\vert-\vert \bsu'\vert-\vert \bsv'\vert}\int \cov(Z_{\vec{x}},Z_{\vec{y}}) \; \nu_{-\bsu'}(\mathrm{d}\vec{x}_{-\bsu'})
\; \nu_{-\bsv'}(\mathrm{d}\vec{y}_{-\bsv'}) \\[1mm]
    &= [T_{\bsu} \otimes T_{\bsv}] k \, (\vec{x}, \vec{y}). \\[-10mm]
    &\hspace*{2mm}
\end{split}
\end{equation}
It remains to show the joint Gaussianity of the $Z^{(\bsu)}$.
First note that $C_b(D,\mathbb{R}^r)$ is a separable Banach space for $r \in \N \setminus \{0\}$. 
We may and do interprete $Z$ as a random element of $C_b(D)$, equipped with the $\sigma$-algebra $\mathcal{B}^{D}$ generated by the evaluation maps $[C_b(D) \ni f \mapsto f(\vec{x}) \in \mathbb{R}]$.
By Theorem~2 in \cite{raj:cam1972} the distribution $\PP Z^{-1}$ of $Z$ is a Gaussian measure on $\bigl( C_b(D),\mathcal{B}(C_b(D)) \bigr)$. Since $T_{\bsu}$ is a bounded linear operator $C_b(D) \to C_b(D)$, we obtain immediately that the ``combined operator'' $\mathfrak{T} \colon C_b(D) 
\to C_b(D,\mathbb{R}^{2^d})$, defined by $(\mathfrak{T}(f))(\vec{x}) = (T_{\bsu}f(\vec{x}))_{\bsu \subseteq I}$, is also bounded and linear. Corollary~3.7 of \cite{tar:vak2007} yields that the image measure $(\PP Z^{-1}) \mathfrak{T}^{-1}$ is a Gaussian measure on 
$C_b(D,\mathbb{R}^{2^d})$.
This means that for every bounded linear operator $\Lambda \colon C_b(D,\mathbb{R}^{2^d}) \to \mathbb{R}$ the image measure $((\PP Z^{-1}) \mathfrak{T}^{-1}) \Lambda^{-1}$ is a univariate normal distribution, i.e.\ $\Lambda(\mathfrak{T}Z)$ is a Gaussian random variable.
Thus, for all $n \in \mathbb{N}$, $\vec{x}^{(i)}\in D$ and $a_i^{(\bsu)} \in \mathbb{R}$, where $1 \leq i \leq n$, $u \subseteq I$, we obtain that $\sum_{i=1}^n \sum_{\bsu \subseteq I} a_i^{(\bsu)} (T_{\bsu} Z)_{\vec{x}^{(i)}}$ is Gaussian by the fact that $[C_b(D) \ni f \mapsto f(\vec{x}) \in \mathbb{R}]$ is continuous (and linear) for every $\vec{x} \in D$. We conclude that $\mathfrak{T}Z = (Z_{\vec{x}}^{(\bsu)}, \bsu  \subseteq I)_{\vec{x}\in D}$ is a vector-valued Gaussian random field.
\end{proof}

\begin{proof}[Corollary~\ref{sparsity}]
\textit{(a)} \, If (i) holds, $[T_{\bsu} \otimes T_{\bsu}]k= T_{\bsu}^{(2)} (T_{\bsu}^{(1)}k)=\mathbf{0}$ by $(T_{\bsu}^{(1)}k)(\bullet,\vec{y}) = T_{\bsu}(k(\bullet,\vec{y}))$; thus (ii) holds. (ii) trivially implies (iii). Statement (iii) means that $\var(Z^{(\bsu)}_{\vec{x}}) = 0$, which implies that $Z^{(\bsu)}_{\vec{x}} = 0$ a.s., since $Z^{(\bsu)}$ is centred. (iv) follows by noting that $\PP(Z^{(\bsu)}_{\vec{x}} = 0)=1$ for all $\vec{x} \in D$ implies $\mathbb{P}( Z^{(\bsu)}=\mathbf{0} ) =1$ by the fact that $Z^{(\bsu)}$ has continuous sample paths and is therefore separable. Finally, (iv) implies (i) because $T_{\bsu}(k(\bullet,\vec{y}))=\cov(Z^{(\bsu)}_{\hspace*{1pt}\text{\scalebox{1}{\raisebox{0pt}{$\bullet$}}}},Z_{\vec{y}})=\mathbf{0}$; compare \eqref{covanovaproj_calc} for the first equality. 

\medskip

\textit{(b)} \, For any $m,n \in \mathbb{N}$ and $\vec{x}_1,\ldots,\vec{x}_m,\vec{y}_1,\ldots,\vec{y}_n \in D$ we obtain by Theorem~\ref{anovaproj_gauss} that $Z^{(\bsu)}_{\vec{x}_1}, \ldots, Z^{(\bsu)}_{\vec{x}_m}, Z^{(\bsv)}_{\vec{y}_1}, \ldots, Z^{(\bsv)}_{\vec{y}_n}$ are jointly normally distributed. Statement~(i) is equivalent to saying that $\cov(Z^{(\bsu)}_{\vec{x}},Z^{(\bsv)}_{\vec{y}}) = 0$ for all $\vec{x}, \vec{y} \in D$. Thus $(Z^{(\bsu)}_{\vec{x}_1}, \ldots, Z^{(\bsu)}_{\vec{x}_m})$ and $(Z^{(\bsv)}_{\vec{y}_1}, \ldots, Z^{(\bsv)}_{\vec{y}_n})$ are independent. Since the sets
\begin{equation}
  \{ (f,g) \in \mathbb{R}^D \times \mathbb{R}^D \colon (f(\vec{x}_1),\ldots,f(\vec{x}_m)) \in A, (g(\vec{y}_1),\ldots,g(\vec{y}_n)) \in B \} 
\end{equation}
with $m,n \in \mathbb{N}$, $\vec{x}_1,\ldots,\vec{x}_m,\vec{y}_1,\ldots,\vec{y}_n \in D$, $A \in \mathcal{B}(\mathbb{R}^m)$, $B \in \mathcal{B}(\mathbb{R}^n)$
generate $\mathcal{B}^D \otimes \mathcal{B}^D$ (and the system of such sets is stable under intersections), statement (ii) follows.
The converse direction is straightforward.
\end{proof}

\begin{proof}[Corollary~\ref{sobol}]
By Remark~\ref{KL}, there is a Gaussian white noise sequence $\varepsilon=(\varepsilon_{i})_{i\in \mathbb{N}\backslash \{0\}}$ such that $Z_{\vec{x}}=\sum_{i=1}^{\infty} \sqrt{\lambda_{i}} \varepsilon_{i} \phi_{i}(\vec{x})$ uniformly with probability 1. From $Z^{(\bsu)}_{\vec{x}}=\sum_{i=1}^{\infty} \sqrt{\lambda_{i}} \varepsilon_{i} T_{\bsu}\phi_{i}(\vec{x})$, we obtain $\| Z^{(\bsu)} \|^2=Q_{\bsu}(\varepsilon, \varepsilon)$ with $Q_{\bsu}$ as defined in the statement. A similar calculation for the denominator of $S_{\bsu}(Z)$ leads to $\sum_{\bsv \neq \emptyset} Q_{\bsv}(\varepsilon, \varepsilon)$, which concludes the proof. 
\end{proof}

\section{Additional examples}\label{sec:9}

Here we give useful expressions to compute the KANOVA decomposition of some famous tensor product kernels with respect to the uniform measure on $[0,1]^{d}$. 
For the sake of simplicity we denote the 1-dimensional kernels that they are based on by $k$ (corresponding to the notation $k_i$ in Example~\ref{ex:ANOVA_TensorProductKernels}). The uniform measure on $[0,1]$ is denoted by $\lambda$.

\begin{example}[Exponential kernel]
\label{ex:exp_kernel}
If $k(x,y) = \exp \left( - \frac{ \vert x-y \vert }{\theta} \right)$, then:
\begin{itemize}
\item $\int_0^1 k(., y) d\lambda = \theta \times
\left[ 2 - k(0, y) - k(1,y) \right]$
\item $\iint_{[0,1]^2} k(.,.) d(\lambda \otimes \lambda)  = 
2 \theta (1 - \theta + \theta e^{-1 / \theta} )$\\
\end{itemize}
\end{example}

\begin{example}[Mat\'ern kernel, $\nu=p+\frac{1}{2}$]
Define for $\nu=p+\frac{1}{2}$ ($p \in \mathbb{N}$):
$$ k(x,y) = \frac{p!}{(2p)!} \sum_{i=0}^p \frac{(p+i)!}{i!(p-i)!} 
 \left( \frac{ \vert x - y \vert}{\theta / \sqrt{8\nu}} \right)^{p-i}
 \times \exp \left(  - \frac{\vert x - y \vert}{\theta / \sqrt{2\nu}}   \right).$$
Then, denoting $\zeta_p = \frac{\theta}{\sqrt{2\nu}}$, we have:
$$\int_0^1 k(., y) d\lambda = \zeta_p \frac{p!}{(2p)!} 
\times \left[ 2c_{p,0} -  A_p \left( \frac{y}{\zeta_p} \right) - A_p \left( \frac{1 - y}{\zeta_p} \right) \right],$$
                           
\noindent where
$$ A_p(u) = \left( \sum_{\ell=0}^p c_{p,\ell} u^\ell \right) e^{-u} 
\quad \textrm{with} \quad c_{p,\ell} = \frac{1}{\ell!} \sum_{i=0}^{p-\ell}{\frac{(p+i)!}{i!} 2^{p-i}}.$$

\noindent
This generalizes Example \ref{ex:exp_kernel}, corresponding to $\nu=1/2$.
Also, this result can be written more explicitly for the commonly selected value $\nu=3/2$ ($p=1, \zeta_1=\theta / \sqrt{3}$):
\begin{itemize}
\item $k(x,y) = \left( 1 + \frac{ \vert x-y \vert }{\zeta_1} \right) \exp \left( - \frac{ \vert x-y \vert }{\zeta_1}\right)$
\item 
$\int_0^1 k(., y) d\lambda = \zeta_1 \times
\left[ 4 - A_1 \left( \frac{y}{\zeta_1} \right) - A_1 \left( \frac{1 - y}{\zeta_1} \right) \right] \vspace{0.2cm}$
with $A_1(u) = (2+u)e^{-u}$
\item $\iint_{[0,1]^2} k(.,.) d(\lambda \otimes \lambda)  = 2\zeta_1 \left[ 
2 - 3 \zeta_1 + (1 + 3 \zeta_1 ) e^{ - 1/\zeta_1 }  \right]$\\ 
\end{itemize}
Similarly, for $\nu=5/2$ ($p=2, \zeta_2=\theta / \sqrt{5}$):
\begin{itemize}
\item $k(x,y) = \left( 1 + \frac{ \vert x-y \vert }{\zeta_2} + \frac{1}{3} \frac{  (x-y)^2 }{(\zeta_2)^2} \right) \exp \left( - \frac{ \vert x-y \vert }{\zeta_2} \right)$
\item $\int_0^1 k(., y) d\lambda = \frac{1}{3} \zeta_2 \times
\left[ 16 - A_2 \left( \frac{y}{\zeta_2} \right) - A_2 \left( \frac{1 - y}{\zeta_2} \right) \right] \vspace{0.2cm}$ 
with $A_2(u) = (8 + 5u + u^2) e^{-u}$
\item $\iint_{[0,1]^2} k(.,.) d(\lambda \otimes \lambda) = \frac{1}{3}\zeta_2 (16 - 30 \,\zeta_2)
+ \frac{2}{3} (1 + 7 \,\zeta_2 + 15 \, (\zeta_2)^2 ) e^{ - 1/\zeta_2 } $
\end{itemize}
\end{example}

\begin{example}[Gaussian kernel]
If $k(x,y) = \exp \left( - \frac{1}{2} \frac{(x-y)^2}{\theta^2} \right)$, then
\begin{itemize}
\item $\int_0^1 k(., y) d\lambda = \theta \sqrt{2\pi} \times
  \left[ \Phi \left(\frac{1-y}{\theta} \right) + \Phi \left(\frac{y}{\theta} \right) - 1 \right] $
\item $\iint_{[0,1]^2} k(.,.) d(\lambda \otimes \lambda) =
2 (e^{-1/(2 \theta^2)} -1 ) + \theta \sqrt{2 \pi} \times \left( 2 \Phi \left(\frac{1}{\theta} \right) - 1 \right)$
\end{itemize}
where $\Phi$ denotes the cdf of the standard normal distribution.
\end{example}

\end{document}